\documentclass[12pt]{amsart}

\usepackage{amssymb}
\usepackage{amsthm}
\usepackage{amsmath}
\usepackage{amsfonts}
\usepackage{latexsym} 
\usepackage{eucal}
\usepackage{indentfirst} 
\usepackage{mathrsfs}

\newtheorem{theorem}{Theorem}[section]

\newtheorem{corollary}[theorem]{Corollory}

\newcommand{\be}{\begin{equation}}
\newcommand{\ee}{\end{equation}}

\begin{document}

\def \cd {, \ldots ,}
\def \lf{\| f \|}
\def \bs{\setminus}
\def \ep{\varepsilon}
\def \sig{\Sigma}
\def \si {\sigma}
\def \gam {\gamma}
\def \cinf {C^\infty}
\def \cid {C^\infty (\partial D)}

\def \hx {\widehat X}
\def \hxx {\widehat X \bs X}
\def \hy {\widehat Y}
\def \hyy {\widehat Y \bs Y}
\def \hj {\widehat J}
\def \hjj {\widehat J \bs J}

\def \ha {\widehat A}
\def \hb {\widehat B}
\def \hk {\widehat K}

\def \Z {\mathbb Z}
\def \C {\mathbb C}
\def \CN {\mathbb C^N}
\def \CNO {\mathbb C^{N+1}}

\def \od {\overline D}
\def \ol {\overline L}
\def \oj {\overline J}

%section 3
\def \cn {c_1, \ldots , c_n}
\def \zx { z \in \widehat X }
\def \zxx {\{ z \in \widehat X \} }
\def \siv {\sum^\infty _{j=1}}
\def \sivv {\sum^\infty _{j=n+1}}
\def \smv {\sum^m_{j=n+1}}
\def \snv {\sum^n_{j=j_0+1}}
\def \bjkz {B_{jk} (z_0)}
\def \epp {\varepsilon^2_n/2}
\def \epn {\varepsilon_{n+1}}
\def \nn { {n+1} }
\def \nnn {{2N}}
\def \ff {F_{z_{0}}}
\def \fk {f_{k_{1}}}
\def \fkv {f_{k_{v}}}
\def \tf {\tilde{f}}

%---------------------------------------------------
\def \ma {\mathfrak{M}_A}  % or \mathscr *
\def \mb {\mathfrak{M}_B}
\def \muc {\mathfrak{M}_{\mathscr U}}
\def \uc {\mathscr U}
\def \xy {(x,y)}
\def \fg {f \otimes g}
\def \vp {\varphi}
\def \pl {\partial L}
\def \ad {A(D)}
\def \aid {A^\infty (D)}
\def \ot {\otimes}
\def \rbl {R_b (L)}
\def\Int {{\rm Int}}

%--------------------------------------------------------
\def \sE {\mathscr E}
\def\endhat{\widehat{\phantom j}}

\subjclass[2000]{32E20}%, 46J10, 46J15}
\title[Cantor set]{A Cantor set whose polynomial hull \\ contains no analytic discs}
%\author{Alexander J. Izzo and Norman Levenberg}
\author{Alexander J. Izzo}
\address{Department of Mathematics and Statistics, Bowling Green State University, Bowling Green, OH 43403}
\email{aizzo@bgsu.edu}
\author{Norman Levenberg}
\address{Department of Mathematics, Indiana University, Bloomington, IN 47405}
\email{nlevenbe@indiana.edu}

\begin{abstract}
\vskip 24pt
A generalization of a result of Wermer concerning the existence of polynomial hulls without analytic discs is presented.  As a consequence it is shown that there exists a Cantor set $X$ in $\C^3$ whose polynomial hull is strictly larger than $X$ but contains no analytic discs.
\end{abstract}

\maketitle

 \section{Introduction}
It was once conjectured that whenever the polynomial hull $\hx$ of a compact set $X$ in $\CN$ is strictly larger than $X$, the complementary set $\hxx$ must contain an analytic disc. This conjecture was disproved by Gabriel Stolzenberg \cite{Stol1}.  However, when $X$ is a smooth one-dimensional manifold, the set $\hxx$, if nonempty, is a one-dimensional analytic variety as was also shown by Stolzenberg \cite{Stol2} (strengthening earlier results of several mathematicians).
% page 2
In contrast, recent work of the first author, H\aa kan Samuelsson Kalm, and Erlend Forn{\ae}ss Wold \cite{ISW} and the first author and Lee Stout \cite{IS} shows that every smooth manifold of dimension strictly greater than one smoothly embeds in some $\CN$ as a subspace $X$ such that $\hxx$ is nonempty but contains no analytic discs.

In response to a talk on the above results given by the first author of the present paper, Hari Bercovici raised the question of whether a {\it nonsmooth} one-dimensional manifold can have polynomial hull containing no analytic discs. This question was the motivation for the present paper.  The authors would like to thank Bercovici for his stimulating question.

In the paper \cite{Izzo} of the first author, a construction is given that yields a Cantor set in $\C^3$ such that $\hxx$ is nonempty but contains no analytic discs, and the result is used there to answer Bercovici's question affirmatively by showing that, in fact, 
every uncountable, compact subspace of a Euclidean space can be embedded in some $\CN$ so as to have polynomial hull containing no analytic discs. 
In the present paper, a different construction of a Cantor set in $\C^3$ such that $\hxx$ is nonempty but contains no analytic discs is presented.  The Cantor sets given by the construction in \cite{Izzo} are different from the ones given by the construction presented here, and they have different properties.  For instance, the Cantor sets $X$ in \cite{Izzo} have the stronger property that the uniform algebra $P(X)$ has a dense set of invertible elements.  This implies that the polynomial and rational hulls of $X$ coincide.  In contrast, the construction given here does not yield density of invertible elements, and as shown in the first author's paper \cite{Izzo2}, it can be used to obtain rationally convex examples.

%page 3
It will be convenient to say that the polynomial hull of a set $X \subset \CN$ is {\it nontrivial} if the set $\hxx$ is nonempty.

The Cantor sets of the present paper will be obtained by generalizing a result of John Wermer \cite{Wermer1982} inspired by an idea of Brian Cole \cite{Cole}.  Specifically, Wermer produced a compact subset $Y$ of $\C^2$ such that, letting $\pi : \C^2 \to \C$ be projection onto the first coordinate, $\pi (Y)$ is the unit circle, $\pi (\widehat Y)$ is the closed unit disc, and $\widehat Y$ contains no analytic discs. 
%page 5
We generalize this by replacing the unit circle by an arbitrary compact set $X \subset \CN$ with nontrivial polynomial hull and replacing the closed unit disc by~$\hx$.

\begin {theorem} \label{Wermergen} 
Let $X \subset \CN$ be a compact set whose polynomial hull is nontrivial.  Then there exists a compact set $Y \subset \CNO$ such that, letting $\pi$ denote the restriction to $\hy$ of the projection $\CNO \to \CN$ onto the first $N$ coordinates, the following conditions hold:
\item {\rm (i)} $\pi (Y) = X$
\item {\rm (ii)} $\pi (\hyy) = \hxx$
\item {\rm (iii)} $\hy$ contains no analytic discs
\item {\rm (iv)} each fiber $\pi^{-1} (z)$ for $z \in \hx$ is totally disconnected.
\end {theorem}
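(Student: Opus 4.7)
The plan is to adapt Wermer's method \cite{Wermer1982} from the unit circle to an arbitrary compact $X \subset \CN$ with $\hxx \neq \emptyset$. The central idea is to adjoin to $X$ a ``Cantor tree'' of fiber values in the extra coordinate $w$, realized as a rapidly convergent sum $w = \sum_n \ep_n f_n(z)$ indexed by sign sequences $(\ep_n) \in \{-1,1\}^{\mathbb{N}}$, where the $f_n$ are polynomials on $\CN$ with carefully chosen $\hx$-norms.

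Concretely, I would inductively choose polynomials $f_1, f_2, \ldots$ on $\CN$ with $\|f_n\|_{\hx}$ decaying so rapidly that $\bigl(2\|f_{M+1}\|_{\hx}\bigr)\bigl(2\sum_n \|f_n\|_{\hx}\bigr)^{2^M} \to 0$ as $M \to \infty$, and then set
\[
Y = \Bigl\{ (z, w) \in X \times \C : w = \sum_{n=1}^\infty \ep_n f_n(z) \text{ for some } (\ep_n) \in \{-1,1\}^{\mathbb{N}} \Bigr\}.
\]
This is compact (a continuous image of $X \times \{-1,1\}^{\mathbb{N}}$) with $\pi(Y) = X$, giving (i). To control $\hy$, consider the polynomial
\[
Q_M(z, w) = \prod_{\ep \in \{-1,1\}^M} \Bigl( w - \sum_{n=1}^M \ep_n f_n(z) \Bigr).
\]
On $Y$, if $w = \sum_n \ep_n^{(0)} f_n(z)$, the factor indexed by the truncation of $\ep^{(0)}$ is bounded by $2 \sum_{n > M} \|f_n\|_{\hx}$ while all other factors are bounded by $2 \sum_n \|f_n\|_{\hx}$, so the decay condition forces $\|Q_M\|_Y \to 0$ and hence $\|Q_M\|_{\hy} \to 0$. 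This pins every point of $\hy$ to lie on the tree $\{(z, \sum_n \ep_n f_n(z)) : z \in \hx,\ \ep \in \{-1,1\}^{\mathbb{N}}\}$. Together with the fact that polynomials in $z$ alone attain their $\hx$-norm on $X$, this yields (ii). For (iv), arranging the $f_n$ to be sufficiently lacunary on $\hx$ makes the map $\ep \mapsto \sum_n \ep_n f_n(z)$ injective for each $z$, so each fiber is a Cantor set and in particular totally disconnected.

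The main obstacle is (iii). Given a non-constant holomorphic disc $\vp = (\alpha, \beta) : \mathbb{D} \to \hy$, the bound $Q_M \to 0$ forces $\beta(\zeta) = \sum_n \ep_n(\zeta) f_n(\alpha(\zeta))$ for some sign-choice function. If $\alpha$ is constant, total disconnectedness of the fiber makes $\beta$ constant. The genuinely hard case is $\alpha$ non-constant, where one must prevent the a-priori-holomorphic branches $\zeta \mapsto (\alpha(\zeta), \sum_n \ep^{(0)}_n f_n(\alpha(\zeta)))$ from actually lying in $\hy$. Following Cole's tower of square-root adjunctions \cite{Cole}, I would enforce this by an inductive diagonalization: at stage $n$, having chosen $f_1, \ldots, f_{n-1}$, pick $f_n$ respecting the decay requirement but also arranged to defeat the $n$-th item in a countable dense list of candidate (disc, sign-sequence) pairs, for instance by prescribing zeros of $f_n$ along candidate $\alpha(\mathbb{D})$ that any would-be holomorphic lift cannot accommodate. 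Carrying out this diagonal construction so that all conditions are simultaneously satisfied is where I expect the principal technical work to lie.
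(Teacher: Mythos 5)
There is a genuine gap, and it sits exactly where you flagged it: case (iii) with $\alpha$ non-constant is not merely the hard part, it is fatal to the construction as you have set it up. Because your $f_n$ are honest single-valued polynomials, for each \emph{fixed} sign sequence $\ep=(\ep_n)$ the function $g_\ep=\sum_n \ep_n f_n$ is a uniform limit of polynomials on $\hx$, hence $g_\ep\in P(\hx)$, and the graph $\Gamma_\ep(X)=\{(z,g_\ep(z)):z\in X\}$ is contained in your $Y$. A standard argument then gives $\Gamma_\ep(\hx)=\widehat{\Gamma_\ep(X)}\subset\hy$. Consequently, whenever $\hx$ itself contains an analytic disc $\sigma$, the map $\zeta\mapsto(\sigma(\zeta),g_\ep(\sigma(\zeta)))$ is an analytic disc in $\hy$, and (iii) fails. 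This already kills the construction for Wermer's original case $X=\{|z|=1\}\subset\C$, and for the Cantor sets of Vitushkin and J\"oricke needed in the Corollary (their hulls have interior). No diagonal choice of the $f_n$ can repair this: the offending discs lie in $\hy$ automatically, for every admissible choice of polynomials. What makes Wermer's argument work is precisely that his ``branches'' are branches of square roots $\sqrt{p_j(z)-a_j}$ with branch loci placed densely in the hull, so that no selection of signs is single-valued holomorphic on any open set; replacing branch choices by independent signs attached to single-valued polynomials discards the essential mechanism. (Relatedly, when the branch-point version is carried out over a general $X\subset\CN$, the authors' earlier draft \cite{IzzoL} found it necessary to pass to $\C^{2N}$ rather than $\CNO$, so even the repaired construction does not obviously deliver one extra coordinate.)

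For comparison, the paper's actual proof takes a completely different route: it takes the closed set $\sE\subset\CNO$ of Harz, Shcherbina, and Tomassini \cite{HST} (closed, proper projection, totally disconnected fibers, pseudoconvex complement, polynomially convex truncations, no analytic discs), sets $Y=\tau^{-1}(X)$, and proves a general pullback statement $\tau^{-1}(\hx)\subset[\tau^{-1}(X)]\widehat{\phantom j}$ via S\l odkowski's theorem on analytic multifunctions applied to $\psi(\lambda)=\max\{|p(\lambda,z)|:(\lambda,z)\in\sE\}$. This reduces all four conditions to properties of the fixed set $\sE$ and avoids any disc-by-disc diagonalization. If you want to pursue your direct approach, you would need to rebuild it around multivalued square-root branches with dense branch loci, as in \cite{Wermer1982} and \cite{IzzoL}, rather than sign sums of polynomials.
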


By applying this theorem with $X$ taken to be a Cantor set in $\C^2$ with nontrivial polynomial hull, we will obtain the promised Cantor set.
The first example of a Cantor set with nontrivial polynomial hull was constructed by Walter Rudin \cite{Rudin-Cantor} using a modification of an argument of Wermer  \cite{Wermer-Cantor} who produced the first example of an {\it arc\/} with nontrivial polynomial hull.  Later, examples of Cantor sets whose polynomial hulls even have interior in $\C^2$ were given by Vitushkin~\cite{Vit}, J\"oricke~\cite{J1, J2}, and Henkin~\cite{Henkin}.

\begin {corollary} \label{Cantorset} 
There exists a Cantor set in $\C^3$ whose polynomial hull is nontrivial but contains no analytic discs.
\end {corollary}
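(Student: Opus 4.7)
The plan is to derive Corollary~\ref{Cantorset} as a direct application of Theorem~\ref{Wermergen}. Start with a Cantor set $X \subset \C^2$ whose polynomial hull is nontrivial; the existence of such an $X$ is given by Rudin's construction in \cite{Rudin-Cantor}. Apply Theorem~\ref{Wermergen} to obtain a compact set $Y \subset \C^3$ and a projection $\pi$ satisfying conditions (i)--(iv).

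The desired polynomial-hull properties of $Y$ are essentially immediate from (ii) and (iii). Condition (ii) gives $\pi(\hyy) = \hxx$, which is nonempty by choice of $X$, so $\hy \bs Y \ne \emptyset$. Condition (iii) states that $\hy$ contains no analytic discs. Thus the only remaining task is to verify that $Y$ itself is a Cantor set, i.e.\ that $Y$ is a compact, metrizable, totally disconnected, perfect, nonempty space.

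Compactness, metrizability, and nonemptiness are evident. Total disconnectedness follows from combining (i) and (iv) with the fact that $X$ is a Cantor set: if $C \subset Y$ is connected, then $\pi(C) \subset X$ is connected, hence a singleton $\{z\}$, so $C$ lies in the fiber $\pi^{-1}(z)$, which is totally disconnected by (iv); thus $C$ is a point.

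The one place where care is needed is showing that $Y$ has no isolated points, and I expect this to be the main obstacle, since it is not obviously visible from the statement (i)--(iv) alone. Two natural routes are available. The first is to inspect the construction of $Y$ produced in the proof of Theorem~\ref{Wermergen} and argue that it is perfect by design (this is the cleanest outcome if the construction cooperates). The second, which avoids appealing to the internal details of the construction, is to pass to a Cantor subset $Z \subset Y$ chosen so that $\pi(Z) = X$: such a $Z$ can be built by a standard recursive partitioning argument, using (i) and the clopen structure of $X$ to split $Y$ into smaller and smaller clopen pieces, and (iv) to ensure that each piece can be further subdivided within its fiber so as to make the selected sets strictly shrinking in diameter. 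Once $Z$ is produced, $\widehat{Z} \subset \hy$ inherits the property of containing no analytic discs from (iii), and since $\pi(\widehat{Z}) \supset \widehat{\pi(Z)} = \hx \supsetneq X = \pi(Z)$, the hull $\widehat{Z}$ strictly contains $Z$, giving the required Cantor set in $\C^3$.
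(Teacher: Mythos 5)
You have correctly reduced the problem to the question of isolated points, and your verification that $Y$ is compact, metrizable, and totally disconnected matches the paper. But neither of your two routes for handling the possible failure of $Y$ to be perfect actually closes the argument. Route (a) is left as a hope (``if the construction cooperates'') and would require a detailed examination of the Harz--Shcherbina--Tomassini construction that you do not carry out. Route (b) contains a false step: the inclusion $\pi(\widehat{Z})\supset \widehat{\pi(Z)}$ does not hold in general; only the reverse inclusion $\pi(\widehat{Z})\subset\widehat{\pi(Z)}$ is automatic (polynomials in the first coordinates are in particular polynomials on the ambient space). For instance, $Z=\{(z,\bar z):|z|=1\}\subset\C^2$ satisfies $P(Z)=C(Z)$ by Stone--Weierstrass and hence is polynomially convex, so $\pi(\widehat{Z})$ is the unit circle while $\widehat{\pi(Z)}$ is the closed disc. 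Consequently, selecting an arbitrary Cantor subset $Z\subset Y$ with $\pi(Z)=X$ gives you no lower bound on $\widehat{Z}$ at all; nothing in conditions (i)--(iv) prevents your recursively chosen $Z$ from being polynomially convex, in which case $\widehat{Z}\setminus Z$ is empty.

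The paper's resolution supplies exactly the idea missing here: let $J$ be the largest perfect subset of $Y$ (the closure of the union of all perfect subsets of $Y$). Then $J$ is perfect by construction and is compact, metrizable, and totally disconnected as a closed subset of $Y$; the crucial point is that passing from $Y$ to $J$ does not destroy the hull, because $\widehat J\setminus J\supset \widehat Y\setminus Y$, which is Lemma~4.2 of \cite{Izzo}. That lemma --- roughly, that the scattered part of a compact set is negligible for producing points of the hull off the set --- is the tool you need and cannot replace by a bare selection argument. Granting it, $\widehat J$ is nontrivial by condition (ii), contains no analytic discs since $\widehat J\subset\widehat Y$, and $J$ is a Cantor set by the standard characterization, which completes the proof.
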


In an earlier draft of the present paper (see \cite{IzzoL}), Theorem~\ref{Wermergen}, but with $\C^{2N}$ in place of $\CNO$, was proven by generalizing the argument given by Wermer for the special case when $X$ is the unit circle in the plane given in \cite{Wermer1982} (and also presented in \cite[pp.~211--218]{Alex-Wermer}).
The proof of Theorem~\ref{Wermergen} presented below is based on the suggestions of a referee.  The authors thank the referee for suggesting this approach in which the set $Y$ is obtained from a set $\sE$ constructed by 
Tobias Harz, Nikolay Shcherbina, and Giuseppe Tomassini \cite{HST}.  The set $\sE$ is constructed in \cite{HST} by a generalization of Wermer's construction the technical details of which are considerably more complicated than those in \cite{IzzoL}.   It seems likely that a construction alongs the lines of the one given in \cite{IzzoL} could be used to give a simpler proof of a statement along the lines of \cite[Theorem~1.1]{HST} in the case when $n$ is an even integer.

As a step toward proving Theorem~\ref{Wermergen}, we will prove the following general  result which may have further applications.

\begin{theorem}\label{refereeIzzo}
Let $\Sigma\subset \CNO$ be a set, and let $\tau:\Sigma\rightarrow \CN$ denote the restriction of the projection $\CNO\rightarrow \CN$ onto the first $N$ coordinates.  Suppose that 
\item {\rm (i)} $\Sigma$ is closed in $\CNO$
\item {\rm (ii)} each point of $\CN$ has a neighborhood $U$ such that the set 
$\{z\in \C: (\lambda, z)\in \Sigma {\rm \ for\ some\ } \lambda \in U\}$ is bounded
\item {\rm (iii)} $\tau$ is surjective
\item {\rm (iv)} $\CNO\bs \Sigma$ is pseudoconvex.\hfil\break
Then for every compact set $X\subset \CN$, we have
$\tau^{-1}(\hx)\subset [\tau^{-1}(X)]\endhat$.
\end{theorem}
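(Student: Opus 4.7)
The plan is to fix a polynomial $Q$ on $\CNO$ and a point $(\lambda_0,z_0)\in\Sigma$ with $\lambda_0\in\hx$, and to show
$$|Q(\lambda_0,z_0)|\leq\|Q\|_{\tau^{-1}(X)};$$
the arbitrariness of $Q$ then places $(\lambda_0,z_0)$ in $[\tau^{-1}(X)]\endhat$ by the definition of polynomial hull. To this end, I would introduce the fiber-sup
$$v(\lambda):=\sup\bigl\{\log|Q(\lambda,z)|:z\in\Sigma_\lambda\bigr\},\qquad \Sigma_\lambda:=\{z\in\C:(\lambda,z)\in\Sigma\}.$$
Condition (iii) ensures each fiber is nonempty, while (i) and (ii) force each fiber to be compact, so the supremum is attained. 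A standard subsequence argument (using (i) to keep the limit in $\Sigma$ and (ii) to keep the realizers $(z_n)$ eventually bounded) shows that $v$ is upper semicontinuous and locally bounded above on $\CN$.

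The core of the proof is to show that $v$ is plurisubharmonic on $\CN$. The underlying principle is a push-forward property: the fiberwise supremum of the psh function $\log|Q|$ along the locally proper projection $\tau$, whose source $\CNO\setminus\Sigma$ is pseudoconvex by (iv), should itself be plurisubharmonic. One convenient reformulation is via the Hartogs-type domain
$$D:=\{(\lambda,w)\in\CNO:|wQ(\lambda,z)|<1\text{ for all }z\in\Sigma_\lambda\}=\{(\lambda,w):\log|w|<-v(\lambda)\},$$
which by the standard Hartogs characterization is pseudoconvex if and only if $v$ is psh. The pseudoconvexity of $D$ is to be extracted from (iv) by analyzing the closed set $S:=\{(\lambda,z,w)\in\C^{N+2}:(\lambda,z)\in\Sigma,\,|wQ(\lambda,z)|\geq 1\}$ and its projection onto $(\lambda,w)$, exploiting the local properness of $\tau$ guaranteed by (ii). I expect this step to be the main technical obstacle: since $\Sigma$ is only assumed to be closed, smooth Levi-form arguments are unavailable, and one must proceed via envelopes or a Nishino--Yamaguchi-type disc-filling argument.

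Once $v$ is known to be plurisubharmonic, the proof finishes with a polynomial-hull argument. By the very definition of $v$, one has $v\leq M:=\log\|Q\|_{\tau^{-1}(X)}$ on $X$. For any compact set $L\supset\hx$, condition (ii) yields a uniform bound on $|z|$ for $(\lambda,z)\in\Sigma$ with $\lambda\in L$; combined with $\deg Q<\infty$, this controls $v$ on $L$ at the scale of $\deg(Q)\cdot\log(1+|\lambda|)+O(1)$, which effectively places $v$ in the Lelong class on $L$. A Bremermann-type approximation, together with the plurisubharmonic characterization of the polynomial hull, then yields $v(\lambda_0)\leq\sup_X v\leq M$ for $\lambda_0\in\hx$. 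Since $v(\lambda_0)\geq\log|Q(\lambda_0,z_0)|$ by definition, we obtain $|Q(\lambda_0,z_0)|\leq\|Q\|_{\tau^{-1}(X)}$, as required.
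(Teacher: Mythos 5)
Your overall architecture coincides with the paper's: form the fiberwise supremum of $\log|Q|$ over the fibers of $\tau$, show that this supremum is plurisubharmonic on $\CN$, and conclude via the identification of the polynomial hull with the hull relative to plurisubharmonic functions on $\CN$. You have also correctly matched hypotheses to steps: (iii) gives nonempty fibers, (i) and (ii) give compact fibers and upper semicontinuity of the fiber map $\lambda\mapsto\Sigma_\lambda$, and (iv) must drive the plurisubharmonicity. The problem is that the step you yourself flag as ``the main technical obstacle'' --- that pseudoconvexity of $\CNO\bs\Sigma$ forces $v(\lambda)=\max\{\log|Q(\lambda,z)|:z\in\Sigma_\lambda\}$ to be plurisubharmonic --- is left as a plan rather than a proof, and it is precisely the nontrivial content of the theorem. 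The Hartogs-domain reformulation does not dispose of it: passing from pseudoconvexity of the complement of the graph in $\CNO$ to pseudoconvexity of your domain $D$ in the $(\lambda,w)$-variables is the whole problem in disguise. The paper closes exactly this gap by observing that (i)--(iv) say that $K(\lambda)=\Sigma_\lambda$ is an upper semicontinuous, nonempty-compact-set-valued function whose graph has pseudoconvex complement, i.e.\ an analytic multifunction, and then invoking S\l odkowski's theorem \cite[Theorem~3.2]{Slod} (or \cite[Theorem~7.1.10]{Au}), which states that the fiberwise maximum of any plurisubharmonic function over such a multifunction is plurisubharmonic. Your proposed envelope/disc-filling route is in the spirit of how such results are actually proved, so the direction is right, but as written the argument is incomplete at its core; you should either prove this step or cite a result that supplies it.

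A secondary remark: your concluding step is more elaborate than necessary. Once $v$ is known to be plurisubharmonic on all of $\CN$ (it is defined everywhere by (iii)), the inequality $v(\lambda_0)\leq\max_X v$ for $\lambda_0\in\hx$ follows at once from the standard fact that the polynomial hull of a compact set in $\CN$ coincides with its hull relative to all plurisubharmonic functions on $\CN$ (see \cite[Theorem~4.3.4]{Hor} or \cite[Theorem~VI.1.18]{Range}); no growth estimates, Lelong-class considerations, or Bremermann-type approximation are needed.
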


Note that in the context of Theorem~\ref{refereeIzzo}, the set $\tau^{-1}(\hx)$ need not be polynomially convex, and thus the \emph{equality} $\tau^{-1}(\hx)= [\tau^{-1}(X)]\endhat$ does not in general hold.  For instance if $\Sigma=\{ (z_1, z_2)\in \C^2: |z_1|^2 + |z_2|^2 \geq 1 {\rm\ and\ } |z_2|\leq 2\}$, then $\tau^{-1}(\{ z\in \C: |z|\leq 1\})$ contains the unit sphere in $\C^2$ but does not contain its polynomial hull, the closed unit ball.

Before concluding this introduction, we make explicit the definitions of some terms already used above.  For a compact set $X$ in $\CN$, the \emph{polynomial hull} $\hx$ of $X$ is defined by
$$\hx=\{z\in\CN:|p(z)|\leq \max_{x\in X}|p(x)|\
\mbox{\rm{for\, all\, polynomials}}\, p\}.$$
The set $X$ is said to be \emph{polynomially convex} if $\hx = X$.
By an \emph{analytic disc} in $\CN$, we mean an injective holomorphic map $\sigma: \{ z\in \C: |z|< 1\}\rightarrow\CN$.
By the statement that a subset $S$ of $\CN$ contains no analytic discs, we mean that there is no analytic disc in $\CN$ whose image is contained in $S$.

The research in this paper was begun while the first author was a visitor at Indiana University.  He would like to thank the Department of Mathematics for its hospitality.

%New page numbering for Section 3   ========================================================
%page 1
\section {The Proofs}

\begin{proof}[Proof of Theorem~\ref{refereeIzzo}]
Let $K$ be the mapping from $\CN$ into the set of nonempty compact subsets of $\C$ whose graph is $\Sigma$, i.e., such that
\[
K(\lambda)=\{z\in \C: (\lambda, z)\in \Sigma\}.
\]
Note that the requirement that $K(\lambda)$ is nonempty for each $\lambda$ is fulfilled by condition (iii).  We leave it as an exercise to verify that the combination of conditions (i) and (ii) is equivalent to the statement that $K$ is upper semicontinuous in the sense that for every $\lambda_0\in \CN$ and every open set $V$ of $\C$ containing $K(\lambda_0)$, there exists a neighborhood $U$ of $\lambda_0$ such that $K(\lambda)\subset V$ for every $\lambda\in U$.
Therefore, condition (iv) gives, by S\l odkowski's theorem on the equivalence of conditions for a set-valued function of one complex variable to be an analytic multifunction 
\cite[Theorem~3.2]{Slod} (or\vadjust{\kern 2pt} see \cite[Theorem~7.1.10]{Au}), that for every plurisubharmonic function $\phi$ on $\CNO$, the function $\Phi$ on $\CN$ defined by 
\[
\Phi(\lambda)=\max\{ \phi(\lambda, z): z\in K(\lambda)\}
\]
is subharmonic on each complex line in $\CN$ and hence is plurisubharmonic on $\CN$.

Now let $w_0\in \tau^{-1}(\hx)$ be arbitrary, and let $p$ be an arbitrary polynomial on $\CNO$.  Since $|p|$ is plurisubharmonic, the conclusion of the preceding paragraph gives that the function $\psi$ on $\CN$ defined by
\[
\psi(\lambda)=\max\{\, |p(\lambda, z)|:z\in K(\lambda)\,\}
\]
is plurisubharmonic.
Obviously
\begin{equation}\label{1}
|p(w_0)|\leq \psi\bigl(\tau(w_0)\bigr).
\end{equation}
Since $\tau(w_0)$ is in $\hx$, the well-known equality of holomorphic hull and plurisubharmonic hull (see for instance \cite[Theorem~4.3.4]{Hor} or 
\cite[Theorem~VI.1.18]{Range}) gives that
\begin{equation}\label{2}
\psi(\tau\bigl(w_0)\bigr)\leq \max\{\psi(\lambda):\lambda\in X\}.
\end{equation}
Combining (\ref{1}) and (\ref{2}), we have
\[
|p(w_0)|\leq \max\{\psi(\lambda):\lambda\in X\}.
\]
But the definition of $\psi$ gives that
\[
\max\{\psi(\lambda):\lambda\in X\}=\max\{|p(w)|:w\in \tau^{-1}(X)\}.
\]
Thus
\[
|p(w_0)|\leq \max\{|p(w)|: w\in \tau^{-1}(X)\},
\]
so $w_0$ is in $[\tau^{-1}(X)]\endhat$.
We conclude that $\tau^{-1}(\hx)\subset [\tau^{-1}(X)]\endhat$.
\end{proof}

\begin {proof}[Proof of Theorem~\ref{Wermergen}]  
Let $\sE \subset \CNO$ be the set given by \cite[Theorem~1.1]{HST} and let $\tau:\sE\rightarrow \CN$ denote the restriction of the projection $\CNO\rightarrow \CN$ onto the first $N$ coordinates.  The set $\sE$ has the following properties:
\item{\rm(a)} $\sE$ is closed in $\CNO$
\item{\rm(b)} each point of $\CN$ has a neighborhood $U$ such that the set 
$\{z\in \C: (\lambda, z)\in \sE {\rm \ for\ some\ } \lambda \in U\}$ is bounded
\item{\rm(c)} $\tau$ is surjective
\item{\rm(d)} $\tau^{-1}(z)$ is totally disconnnected for each $z\in \CN$
\item{\rm(e)} $\CNO\bs \sE$ is pseudoconvex
\item{\rm(f)} for every $R>0$, the intersection of $\sE$ with the closed ball of radius $R$ centered at the origin in $\CNO$ is polynomially convex
\item{\rm(g)} $\sE$ contains no analytic discs.\hfil\break
Properties (a), (e), (f), and (g) are given in \cite[Theorem~1.1]{HST}.  The other properties are not given in the statement of the theorem; the reader is invited to examine the proof of \cite[Theorem~1.1]{HST} to verify that the set constructed there has these properties.

Let $Y=\tau^{-1}(X)$.  Then $Y$ is compact by conditions (a) and (b), and (i) of the theorem holds on account of condition (c).  It is easily shown that condition (f) yields that $\tau^{-1}(\hx)$ is polynomially convex.  Thus $\hy\subset \tau^{-1}(\hx)\subset \sE$, so (iii) and (iv) of the theorem follow from conditions (g) and (d), respectively.  In view of conditions (a), (b), (c), and (e), Theorem~\ref{refereeIzzo} 
gives\vadjust{\kern 3pt} that $\tau^{-1}(\hx)\subset [\tau^{-1}(X)]\endhat = \hy,$
so $\hy=\tau^{-1}(\hx)$.  Therefore, condition (c) gives  that $\pi(\hy)=\hx$, and this together with the equality $Y=\tau^{-1}(X)$ yields $\pi (\hyy) = \hxx$, i.e., (ii) of the theorem holds.
\end{proof}

\begin {proof} [Proof of Corollary~\ref{Cantorset}:]
As mentioned in the introduction, there are several examples in the literature of Cantor sets in $\C^2$ having nontrivial polynomial hull.  Let $X$ be one of these Cantor sets, and let $Y$ be the set in $\C^3$ given by then applying Theorem~\ref{Wermergen}.  
Let $J$ be the largest perfect subset of $Y$.  (Recall that a subset of a space is called \emph{perfect} if it is closed and has no isolated points.  Every space contains a unique largest perfect subset (which can be empty), namely the closure of the union of all perfect subsets of the space.)
By \cite[Lemma~4.2]{Izzo}, $\hjj \supset \hyy$, so condition (ii) of Theorem~\ref{Wermergen} gives that $\hj$ is nontrivial, and since $\hj \subset \hy$, condition (iii) gives that $\hj$ contains no analytic discs.  
It follows from conditions (i) and (iv) of Theorem~\ref{Wermergen} that  $J$ is a Cantor set by the well-known characterization of Cantor sets as the compact, totally disconnected, metrizable spaces without isolated points.  
\end{proof}


\begin{thebibliography}{BC87}



\bibitem{Alex-Wermer} H. Alexander and J. Wermer, {\it
Several Complex Variables and Banach Algebras}, 3rd~ed.,
Springer, New York, 1998.

\bibitem{Au} B. Aupetit, {\it A Primer on Spectral Theory\/}, Springer, New York, 1991.

\bibitem{Cole} B. J. Cole, {\it One-point parts and the peak point conjecture\/}, Ph.D. dissertation, Yale University, 1968.

\bibitem{HST}  T. Harz, N. Shcherbina, and G. Tomassini, {\it Wermer type sets and extensions of CR functions\/},
Indiana Univ.\ Math.\ J.\ {\bf 61} (2012), 431--459.

\bibitem{Henkin} G. M. Henkin, 
{\it Envelopes of holomorphy of Jordan curves in $\C^n$\/}, Proc.\ Steklov Inst.\ Math.\ {\bf 253\/} (2006), 195--211.

\bibitem{Hor} L. H\" ormander, {\it An Introduction to Complex Analysis in Several Variables\/}, 2nd ed., North-Holland, 1979.

\bibitem{Izzo} A. J. Izzo, {\it Spaces with polynomial hulls that contain no analytic discs\/}, (submitted).

\bibitem{Izzo2} A. J. Izzo, {\it No topological condition implies equality of polynomial and rational hulls\/}, (submitted).

\bibitem{IzzoL} A. J. Izzo and N. Levenberg, {\it A Cantor set whose polynomial hull contains no analytic discs\/}, arXiv:1801.03205v2.

\bibitem{ISW} A. J. Izzo, H. Samuelsson Kalm, and E. F. Wold, {\it Presence or absence of analytic structure in maximal ideal spaces\/},
Math.\ Ann.\ {\bf 366} (2016), 459--478.

\bibitem{IS} A. J. Izzo and E. L. Stout,
{\it Hulls of surfaces\/}, Indiana Univ.\ Math.\ J.\ {\bf 67} (2018), 2061-- 2087.

\bibitem{J1} B. J\"oricke, {\it A Cantor set in the unit sphere in $\C^2$ with large polynomial hull\/}, Mich.\ Math.\ J.\ {\bf 53} (2005), 189--207.

\bibitem{J2} B. J\"oricke, {\it Hausdorff dimension of Cantor sets and polynomial hullls\/}, Proc.\ Amer.\ Math.\ Soc.\ {\bf 134} (2006), 1347--1354.

\bibitem{Range} R. M. Range, {\it Holomorphic Functions and Integral Representations in Several Complex Variables\/}, Springer, New York, 1986.

\bibitem{Rudin-Cantor} W. Rudin, {\it Subalgebras of spaces of continuous functions\/},
Proc.\ Amer.\ Math.\ Soc.\ {\bf 7} (1956), 825--830.

\bibitem{Slod} Z. S\l odkowski, {\it Analytic set-valued functions and spectra\/}, Math.\ Ann.\ {\bf 256} (1981), 363--386.

\bibitem{Stol1}  G. Stolzenberg, {\it A hull with no analytic structure\/},
J.\ Math.\ Mech.\  {\bf 12} (1963), 103--111.

\bibitem{Stol2}  G. Stolzenberg,  {\it Uniform approximation on smooth curves\/},
Acta Math.\ {\bf 115} (1966), 185--198.

\bibitem{Vit}  A. G. Vitushkin,  {\it On a problem of Rudin\/},
Dokl.\ Akad.\ Nauk.\ SSSR {\bf 213} (1973), 14--15.

\bibitem{Wermer-Cantor}  J. Wermer,  {\it Polynomial approximation on an arc in $C^3$\/},
Ann of Math. (2) {\bf 62\/} (1955), 269--270.

\bibitem{Wermer1982}  J. Wermer,  {\it Polynomially convex hulls and analyticity\/},
Arkiv f\"or Matem.\ {\bf 20} (1982), 129--135.



\end{thebibliography}
\end{document}